\documentclass[reqno, 12pt]{amsart}

\usepackage{array}
\usepackage{amsmath}
\usepackage{amsfonts}
\usepackage{amssymb}
\usepackage{amsthm}
\usepackage{amsmath, amscd}
\usepackage{xy}
\xyoption{all}
\usepackage{marvosym}
\usepackage{hyperref}
	
    \oddsidemargin  0.0in
    \evensidemargin 0.0in
    \textwidth      6.5in
    \headheight     0.0in
    \topmargin      0.0in
    \textheight=9.0in
    
\newtheorem{introtheorem}{Theorem}

\newtheorem{theorem}{Theorem}[section]
\newtheorem{Theorem}{Theorem}
\newtheorem{lemma}[theorem]{Lemma}
\newtheorem{proposition}[theorem]{Proposition}
\newtheorem{corollary}[theorem]{Corollary}

\newtheorem{definition}[theorem]{Definition}
\newtheorem{remark}[theorem]{Remark}

\newtheorem{conjecture}[Theorem]{Conjecture}

\def\N{\mathbb{N}}
\def\Z{\mathbb{Z}}
\def\C{\mathbb{C}}

\def\Q{\mathbb{Q}}
\def\F{\mathcal{F}}

\def\O{\mathcal{O}}
\def\A{\mathcal{A}}
\def\P{\mathbb{P}}
\def\der{\text{D}^{\text{b}}_{\text{coh}}(X)}
\def\dery{\text{D}^{\text{b}}_{\text{coh}}(Y)}
\def\derz{\text{D}^{\text{b}}_{\text{coh}}(Z)}
\def\ider{\emph{D}^{\emph{b}}_{\emph{coh}}(X)}
\def\idery{\emph{D}^{\emph{b}}_{\emph{coh}}(Y)}
\def\iderz{\emph{D}^{\emph{b}}_{\emph{coh}}(Z)}
\def\hom{\text{Hom}}

\def\tif{\text{if } }

\def\A{\mathcal{A}}
\def\L{\mathcal{L}}

\def\Fam{\{\mathcal{A}_i\}}

\title{
Reconstruction and finiteness results for Fourier-Mukai partners
}

\author{David Favero}

\address{Universit\"at von Wien
\newline\indent Fakult\"at f\"ur Mathematik 
\newline\indent Nordbergstr. 15 UZA4 C402
\newline\indent A-1090 Wien
\newline\indent Austria, Europe
\newline\indent{\bf favero@gmail.com}}

\numberwithin{equation}{section}

\begin{document}

\begin{abstract}
We develop some methods for studying the Fourier-Mukai partners of an algebraic variety.  As applications we prove that abelian varieties have finitely many Fourier-Mukai partners and that they are uniquely determined by their derived category of coherent $D$-modules.  We also generalize a famous theorem due to A. Bondal and D. Orlov.

\end{abstract}

\maketitle

\section{Introduction}
In \cite{Mu}, S. Mukai describes an equivalence between the derived category of sheaves on an abelian variety and and its dual.  This equivalence is achieved through a functor constructed in analogy with the Fourier transform.  The literature is now full of examples in which functors, mimicking the one provided by Mukai, give equivalences between the derived categories of two non-isomorphic varieties.  Such equivalences are called Fourier-Mukai transforms and two varieties which are equivalent by way of a Fourier-Mukai transform are called Fourier-Mukai partners.

In light of these equivalences, A. Bondal and D. Orlov pose a natural question in \cite{BO}: for a variety, how strong of an invariant is its derived category of sheaves?  Despite the aforementioned equivalences, they go on to prove that for a smooth projective variety with ample or anti-ample canonical bundle, the derived category is a complete invariant. More precisely, such a variety is uniquely determined from its bounded derived category of coherent sheaves.  This result is achieved through the rigidity that Serre duality imposes on the category.  Hence, it has been suggested that the derived category of sheaves on a variety becomes a stronger invariant as one passes further and further from the world of Calabi-Yau varieties.  Even so, the following conjecture postulates that the derived category of sheaves is a rather strong invariant on any smooth variety:

\begin{conjecture} [Kawamata \cite{Kaw}]\label{conj:1}
Let $X$ be a smooth (projective) variety.  There are finitely many isomorphism classes of varieties, $Y$, such that $X$ and $Y$ have equivalent bounded derived categories of coherent sheaves.
\end{conjecture}

\noindent This conjecture is known to hold for proper curves and surfaces \cite{H, Kaw}.  In this paper we will prove that this conjecture holds for abelian varieties over $\C$ and for varieties with ample or anti-ample canonical bundle (not necessarily proper).  These are special cases of the following two theorems:

\begin{introtheorem} \label{intro:abelian}
Let $X$ be a smooth projective variety over $\C$ and $\rho_X$ be the representation of the autoequivalence group of the derived category, $\emph{Aut}(\ider)$, on the de Rham cohomology group $\emph{H}^*(X, \Q)$.  If the kernel of $\rho_X$ is minimal in the sense that it is equal to $2\Z \times \emph{Aut}^0(X) \ltimes \emph{Pic}^0(X)$, then the number of isomorphism classes of projective varieties, $Y$, such that $\ider$ is equivalent to $\idery$ is bounded by the number of conjugacy classes of maximal unipotent subgroups of the image of $\rho_X$.  In particular, if the image of $\rho_X$ is an arithmetic subgroup of a semisimple lie group, then this number is finite.  Hence, as these conditions are satisfied by abelian varieties \cite{H,O,GLO}, Conjecture~\ref{conj:1} holds (with regards to projective Fourier-Mukai partners) for abelian varieties over $\C$. 
\end{introtheorem}

\begin{introtheorem} \label{intro:BO}
Let $X$ be a smooth variety.  Suppose that for any proper subvariety, $Z \subseteq X$, the canonical bundle of $X$, restricted to $Z$, is either ample or anti-ample (this is allowed to vary among the subvarieties).  If $Y$ is a smooth variety and $\ider$ is equivalent to $\idery$, then $X$ is isomorphic to $Y$.  In particular, Conjecture~\ref{conj:1} holds for $X$.
\end{introtheorem}

Using results from \cite{GLO}, we explicitly calculate the bound from Theorem~\ref{intro:abelian} for abelian varieties with Neron-Severi group equal to $\Z$.  This bound turns out to be the number of inequivalent cusps for the action of $\Gamma_0(N)$ on the upper half plane (N is a certain invariant of the abelian variety).  When $N$ is squarefree, the number of inequivalent cusps is precisely the number of Fourier-Mukai partners which are also abelian varieties, hence the bound is achieved.  However, the number of inequivalent cusps is not achieved by abelian varieties when $N$ is not squarefree.  As it turns out, any Fourier-Mukai partner of an abelian variety is in fact an abelian variety.  This statement has been proven, independent of this work, by D. Huybrechts together with M. Nieper-Wisskirchen in \cite{HN} and by F. Rosay in an unpublished thesis \cite{Ros}.  The author believes that when $N$ is not squarefree, the bound may be achieved by some more exotic spaces (not by varieties), for example see the equivalences in \cite{P}.

The moduli space of integrable local systems on an abelian variety satisfies the conditions of Theorem~\ref{intro:BO} (in fact all proper subvarieties are zero-dimensional in this case).  It was pointed out to the author by D. Arinkin that due to the equivalence of categories between the derived category of coherent sheaves on this moduli space and the derived category of coherent $D$-modules on the dual abelian variety \cite{L, PR, Roth}, one can use Theorem~\ref{intro:BO} toc recover an abelian variety from its derived category of $D$-modules.  Hence as a corollary to Theorem~\ref{intro:BO} we have the following theorem which we attribute to Arinkin.

\begin{introtheorem}[Arinkin]
If two abelian varieties $A$ and $B$ have equivalent derived categories of coherent $D$-modules then $A$ is isomorphic to $B$.
\end{introtheorem}

\noindent It has been conjectured by Orlov that if any two smooth varieties, $X$ and $Y$, have equivalent derived categories of coherent $D$-modules then they are isomorphic.

Theorems~\ref{intro:abelian} and~\ref{intro:BO} stem from the following two results of independent interest:
\begin{introtheorem} \label{intro:reconstruction}
Let $X$ and $Y$ be divisorial varieties, $F: \ider \to \idery$ an equivalence, $\Fam$ an ample family of line bundles on $X$, and $\{ \mathcal M_i \}$ any collection of line bundles on $Y$.  If tensor multiplication by $\mathcal A_i$ conjugates via $F$ to tensor multiplication by $\mathcal M_i$ then $X$ is isomorphic to $Y$.
\end{introtheorem}

\begin{introtheorem} \label{intro:recoverproper}
Let $X$ be a $k$-variety. Then $\mathfrak{Prop}(X)$ is equivalent to the full subcategory of $\ider$ consisting of objects $A \in \ider$ with the property that $\emph{Hom}(A,B)$ is finite dimensional over $k$ for all $B \in \ider$.  
\end{introtheorem}

Theorem~\ref{intro:reconstruction} informs us that any divisorial variety $Y$ such that $\der$ is equivalent to $\dery$ is encoded in the autoequivalence group of $\der$.  In the case of abelian varieties, the literature on this autoequivalence group is extensive, see for example \cite{GLO,H,O,P}.  This deep understanding of the autoequivalence group allows us to prove Theorem~\ref{intro:abelian}.  The author hopes that Theorem~\ref{intro:abelian} and Theorem~\ref{intro:reconstruction} may find further use when more is understood about the autoequivalence group of the derived category of a general variety.

Theorem~\ref{intro:recoverproper} is also readily applicable in that it allows us to redirect our attention from an equivalence between two categories with infinite dimensional Hom sets, to  an equivalence between two categories with finite dimensional Hom sets and Serre functors.  This is what produces Theorem~\ref{intro:BO}.  In principal, this method should extend to other situations.  For example, a generalization of Y. Kawamata's work in \cite{Kaw}, should be possible as well.

\section{Notations, conventions, and preliminaries}
As a matter of convention, a variety always means an integral separated scheme of finite type over a field $k$.  	For a variety $X$, we denote by $\der$, the bounded derived category of coherent sheaves of $\O_X$-modules.  The following proposition is well known and will be used implicitly, (see Proposition 3.5 in \cite{H}):
\begin{proposition}
Let $X$ be a noetherian scheme.  The natural functor:
$$
\ider \to \emph{D}^{\emph{b}}(\emph{\textbf{Qcoh}}(X))
$$
defines an equivalence between $\ider$ and the the full triangulated subcategory of 

\noindent $\emph{D}^{\emph{b}}(\emph{\textbf{Qcoh}}(X))$ consisting of objects with coherent cohomology.
\end{proposition}
All left or right exact functors, such as tensor products, pullbacks, and pushforwards, acting on objects in the derived category, are taken to be derived functors unless otherwise stated.  Furthermore, let $A$ be an object of $\der$, then we use the notation $\mu_A$ to denote the left derived functor of tensor multiplication by $A$.  That is, for $B \in \der$, we have $\mu_A(B) = A \otimes B$.
For two varieties, $X$ and  $Y$, when we posit the exist of an equivalence (or were we to discuss any functor for that matter), $F: \der \to \dery$, we always assume that $F$ is an exact functor i.e. $F$ preserves the $k$-linear and triangulated structures.
In fact most functors we discuss are integral transforms or Fourier-Mukai transforms.  We recall their definition:
\begin{definition} Let $X$ and $Y$ be varieties with $\mathcal P \in \emph{D}^{\emph{b}}_{\emph{coh}}(X \times Y)$.  Denote the two projections by,
\begin{eqnarray*}
q: X \times Y \to X &\text{and} &p:X \times Y \to Y.
\end{eqnarray*}

\noindent The induced \emph{integral transform} is the functor,
\begin{eqnarray*}
\Phi_{\mathcal P}: \ider \to \idery& , &A \mapsto p_*(q^*A \otimes \mathcal P).
\end{eqnarray*}
The object $\mathcal P$ is called the \emph{kernel} of the transform $\Phi_{\mathcal P}$.  Furthermore, the integral transform $\Phi_{\mathcal P}$ is called a \emph{Fourier-Mukai transform} if it is an equivalence.

\end{definition}

Similarly, when $X$ and $Y$ are compact algebraic varieties over $\C$, we can define a cohomological Fourier-Mukai transform for rational cohomology (implicitly using G.A.G.A.).

\begin{definition} \label{cohomological integral transform}
Let $X$ and $Y$ be compact algebraic varieties over $\C$ and $\alpha \in \emph{H}^*(X \times Y, \Q)$.  The \emph{cohomological integral transform} is defined as:
\begin{eqnarray*}
\Phi^H_{\alpha}: \emph{H}^*(X, \Q) \to \emph{H}^*(Y, \Q) & , &\beta \mapsto p_*(\alpha \cdot q^*(\beta)).
\end{eqnarray*}
For $\mathcal P \in \emph{D}^{\emph{b}}_{\emph{coh}}(X \times Y)$, as shorthand, we set $$\Phi^H_{\mathcal P} := \Phi^H_{\emph{ch}(\mathcal P) \cdot \sqrt{\emph{td}(X \times Y)}}.$$
The term, $\emph{ch}(\mathcal P) \cdot \sqrt{\emph{td}(X \times Y)}$, is called the \emph{Mukai vector} of $\mathcal P$.
\end{definition}
\noindent Using the Grothendeick-Riemann-Roch Theorem, one obtains (see \cite{H}, Proposition 5.33):
\begin{proposition}
If $\mathcal P \in \emph{D}^{\emph{b}}_{\emph{coh}}(X \times Y)$ defines an equivalence then the induced cohomological integral transform, $\Phi^H_{\mathcal P}$, is an isomorphism of rational vector spaces (not necessarily respecting the grading). 
\end{proposition}
\noindent Therefore, the autoequivalence group $\text{Aut}(\der)$ acts on $\text{H}^*(X, \Q)$.  Hence one obtains a representation, $\rho_X: \text{Aut}(\der) \to \text{Gl}(\text{H}^*(X, \Q))$.

For the reader's convenience, we now also recall the notion of an ample family of line bundles (see \cite{SGA} 6.II 2.2.3).

\begin{definition}
Let $X$ be a quasi-compact, quasi-separated scheme and $\Fam$ be a family of invertible sheaves on $X$.  The collection, $\Fam$, is called an \emph{ample family of line bundles} if it satisfies the following equivalent conditions:
\renewcommand{\labelenumi}{ \emph{\alph{enumi})}}
\begin{enumerate}
\item The open sets $X_f$ for all $f \in \Gamma(X,\A_i^{\otimes n})$ with $i\in I$, $n>0$ form a basis for the Zariski topology on $X$.

\item There is a family of sections $f \in \Gamma(X,\A_i^{\otimes n})$ such that the $X_f$ form an affine basis for the Zariski topology on $X$.

\item There is a family of sections $f \in \Gamma(X,\A_i^{\otimes n})$ such that the $X_f$ form an affine cover of $X$.

\item For any quasi-coherent sheaf $\F$ and $i \in I$, $n>0$ let $\F_{i,n}$ denote the subsheaf of $\F \otimes \A_i^{\otimes n}$ generated by global sections.  Then $\F$ is the sum of the submodules $\F_{i,n} \otimes \A_i^{\otimes -n}$.

\item For any quasi-coherent sheaf  of ideals $\F$ and $i \in I$, $n>0$, $\F$ is the sum of the submodules $\F_{i,n} \otimes \A_i^{\otimes -n}$.

\item For any quasi-coherent sheaf $\F$ of finite type there exist integers $n_i, k_i >0$ such that $\F$ is a quotient of $\bigoplus \limits_{i \in I}\A_i^{\otimes -n_i} \otimes \O_X^{k_i}$.

\item For any quasi-coherent sheaf of ideals $\F$ of finite type there exist integers $n_i, k_i >0$ such that $\F$ is a quotient of $\bigoplus \limits_{i \in I}\A_i^{\otimes -n_i} \otimes \O_X^{k_i}$.

\end{enumerate}

\end{definition}

\noindent A scheme which admits an ample family of line bundles is called divisorial.  All smooth varieties are divisorial (see \cite{SGA} 6.II).  More generally any normal noetherian locally $\Q$-factorial scheme with affine diagonal is divisorial \cite{BS}.  When discussing ample families in the proceeding section, we fix the following notation: $X$ is a noetherian scheme,  $\Fam$ is a finite ample family of line bundles on $X$, and hence $\mu_{A_i}$ is the autoequivalence of $\der$ which corresponds to tensoring with the sheaf $\A_i$.  We also use multi-index notation so that $\A^d := \A_1^{\otimes d_1} \otimes \cdots \otimes \A_r^{\otimes d_r}$ for $d \in \N^r$.

\section{Reconstructions and equivalences}

Using \cite{BO} as a guideline, we begin by proving Theorem~\ref{intro:reconstruction} from the introduction: the bounded derived category of coherent sheaves on a scheme together with the tensor multiplication functors corresponding to an ample family of line bundles uniquely determines the scheme.  The first step in our process, as in Bondal and Orlov's, is to recover the set of closed points - mildly disguised in the derived category as the isomorphism classes of structure sheaves of closed points concentrated in a fixed degree.  The following definition is a direct generalization of the one found in their paper:

\begin{definition}
  An object $P \in \ider$ is called a \emph{point object with respect to a collection of autoequivalences} $\{A_i\}$ if the following hold:
\renewcommand{\labelenumi}{\emph{\roman{enumi})}}
\begin{enumerate}
\item $A_i (P) \cong P$ for all $i$, 

\item $\emph{Hom}^{<0}(P,P) = 0$,

\item $\emph{Hom}^{0}(P,P) = k(P)$ with $k(P)$ a field.

\end{enumerate}
\end{definition}

\begin{proposition} \label{prop:recoverpoints}
Let $\Fam$ be an ample family of line bundles on a noetherian scheme $X$.  Then $P$ is a point object with respect to $\{\mu_{\mathcal A_i}\}$ if and only if $P \cong \O_x[r]$ for some $r \in \Z$ and some closed point $x \in X$.
\end{proposition}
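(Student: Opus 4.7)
The plan is to argue each direction separately, the ``if'' direction by direct verification and the ``only if'' direction in three steps.

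For the ``if'' direction, if $P = \mathcal{O}_x[r]$, then (i) follows from the projection formula: the fiber of $\mathcal{A}_i$ at $x$ is a one-dimensional $k(x)$-vector space, giving $\mathcal{A}_i \otimes \mathcal{O}_x \cong \mathcal{O}_x$. Conditions (ii) and (iii) reduce to $\text{End}(\mathcal{O}_x) = k(x)$ and the vanishing of $\text{Ext}^{<0}(\mathcal{O}_x, \mathcal{O}_x)$, both standard.

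For the converse, I would first show that $\text{supp}(P)$ is a single closed point by exploiting the ample family. Given $s \in \Gamma(X, \mathcal{A}_i^{\otimes n})$, multiplication by $s$ defines a morphism $\phi_s \colon P \to P \otimes \mathcal{A}_i^{\otimes n}$ in $\der$. Composing with an isomorphism $\psi \colon P \otimes \mathcal{A}_i^{\otimes n} \cong P$ obtained by iterating condition (i) lands in $\text{Hom}^0(P, P) = k(P)$ and hence equals a scalar $\lambda(s)$. The dichotomy ``$\phi_s = 0$ versus $\phi_s$ is an isomorphism'' is independent of the choice of $\psi$, and is distinguished by whether $\lambda(s)$ vanishes. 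If $\phi_s = 0$, then multiplication by $s$ is zero on each cohomology sheaf, forcing $\text{supp}(P) \subseteq Z(s)$; if $\phi_s$ is an isomorphism, then Nakayama applied stalkwise at each point of $Z(s)$ shows $\text{supp}(P) \subseteq X_s$. Thus for every ample section $s$, $\text{supp}(P)$ lies entirely in $Z(s)$ or entirely in $X_s$. If $\text{supp}(P)$ contained two distinct closed points $x \neq y$, ample-family condition (a) would supply a section separating them, a contradiction. A standard noetherian argument then rules out any higher-dimensional component, so $\text{supp}(P) = \{x\}$ for a unique closed point $x$.

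Next, I would show that $P$ has cohomology concentrated in a single degree. Suppose for contradiction that $a = \min\{i : H^i(P) \neq 0\}$ and $b = \max\{i : H^i(P) \neq 0\}$ satisfy $a < b$. The truncation triangles give canonical morphisms $\iota \colon H^a(P)[-a] \to P$ and $\pi \colon P \to H^b(P)[-b]$, each inducing the identity on the relevant cohomology sheaf. By the previous step, $H^a(P)$ and $H^b(P)$ are finite-length sheaves supported at $x$, each having $k(x)$ as a composition factor, so there is a nonzero morphism $\phi \colon H^b(P) \to H^a(P)$. The composition $\iota[a-b] \circ \phi[-b] \circ \pi$ induces $\phi$ on the $b$-th cohomology sheaf and is therefore nonzero in $\text{Hom}^{a-b}(P, P)$, contradicting (ii). Hence $P \cong \mathcal{F}[-r]$ for some $r \in \mathbb{Z}$ and some coherent sheaf $\mathcal{F}$ supported at $x$. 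Finally, (iii) forces $\text{End}(\mathcal{F}) = k(P)$ to be a field; since $\mathcal{F}$ is a finite-length $\mathcal{O}_{X,x}$-module whose only composition factor is $k(x)$, length at least $2$ would produce a nonzero nilpotent endomorphism, so $\mathcal{F} \cong \mathcal{O}_x$ and $P \cong \mathcal{O}_x[-r]$.

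The main obstacle is the support reduction: translating the categorical invariance $A_i(P) \cong P$ into geometric information requires the scalar trick $\lambda(s)$ combined with careful use of the ample family to separate points. Once the support collapses to a single closed point, both the cohomological concentration and the final identification of $\mathcal{F}$ are largely formal manipulations of truncation triangles and finite-length modules over a local ring.
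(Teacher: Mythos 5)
Your proof follows essentially the same route as the paper's: the scalar dichotomy for multiplication by sections of the ample family pins down the support, and the remaining reduction to $\mathcal{O}_x[r]$ is exactly the content of Huybrechts' Lemma 4.5, which the paper cites and you reprove directly via truncation triangles and finite-length module theory. The one step to tighten is the support argument: on a general noetherian scheme a closed subset with a unique closed point can still be positive-dimensional (e.g.\ the spectrum of a DVR), so rather than separating only \emph{closed} points and invoking an unspecified ``standard noetherian argument,'' you should run the same separation on arbitrary distinct points of $\operatorname{supp}(P)$ (one of them fails to specialize to the other since the space is $T_0$, and the basic opens $X_f$ then separate them), which is how the paper phrases it and which directly yields that the support is a single closed point.
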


\begin{proof}
Any shifted structure sheaf of a closed point is clearly a point object.  On the other hand suppose $P$ is a point object for $\{\mu_{\mathcal A_i}\}$.  
For each $f \in \Gamma(X, \A^d)$ we have an exact triangle $\O_X \to \A^d \to C$.  Notice that $C$ is a sheaf supported on $Z(f)$, the zero locus of $f$. Tensoring with $P$ we get a new triangle,
$P \to P \otimes \A^d \to P \otimes C$.  Since $P$ is a point object with respect to $\{\mu_{A_i}\}$, by  property (i), one has an isomorphism $P \cong P \otimes \A^d$.  Hence, by property (iii) the first morphism in the triangle, $\mu_P(f)$, is either zero or an isomorphism for any $f$.  If $\mu_P(f)$ is an isomorphism then $P \otimes C = 0$ hence, $f$ does not vanish on the support of $P$ i.e. $\text{Supp}(P) \cap Z(f) = \emptyset$.  If $\mu_P(f) = 0$ then $P$ is a summand of $P \otimes C$, hence $\text{Supp}(P) \subseteq Z(f)$.  In summary, for all $f$ one of the following is true; either $\text{Supp}(P) \cap Z(f) = \emptyset$ or $\text{Supp}(P) \subseteq Z(f)$.

Now suppose that $x,y$ are two distinct closed points in the support of $P$.   For each $f \in \Gamma(X, \A^d)$ and $d \geq 0$, denote by $X_f$ the open complement of $Z(f)$. Since $\Fam$ is an ample family, the collection of all $X_f$ form a basis for the Zariski topology of $X$.  Thus for some $d$ there exists a function $f \in \Gamma(X, \A^d)$ such that $f$ vanishes on $x$ but not on $y$, yielding a contradiction.  It follows that $P$ is supported at a point.  The result then follows from \cite{H} Lemma 4.5 (this lemma uses the noetherian assumption).
\end{proof}

We have shown that one can recover the structure sheaves of points up to shift.  Continuing with Bondal and Orlov's work as a guideline,  we now wish to recover the line bundles up to shift.  This motivates the following definition which is once again a direct generalization of their work:
\begin{definition}
  An object $L \in \ider$ is called \emph{invertible for a set S} if for all $P \in S$ there exists an $n_P \in \Z$ such that :

$$
\emph{Hom}(L,P[i]) = \begin{cases}
k(P) & \tif i=n_P \\
0 & otherwise 
\end{cases}
$$
We say an object is \emph{invertible with respect to a collection of autoequivalences} $\{A_i\}$ if it is invertible for the set of point objects with respect to $\{A_i\}$.
\end{definition}

\begin{proposition}\label{prop:recoverinvertibles}
Let $\Fam$ be an ample family of line bundles on a noetherian scheme $X$.  Then $L$ is an invertible object with respect to $\{\mu_{A_i}\}$ if and only if $L \cong \L[r]$ for some $r \in \Z$ and some line bundle $\L$.
\end{proposition}
\begin{proof}
The proof of Proposition 2.4 in \cite{BO} applies directly to this situation.
\end{proof}

\begin{lemma}\label{lem:pointstopoints}
Suppose $X$ and $Y$ are divisorial varieties and $F: \ider \to \idery$ is an equivalence which maps objects with zero dimensional support to objects with zero-dimensional support then $X$ is isomorphic to $Y$.  Similarly, if $F$ maps shifted skyscraper sheaves of closed points to shifted skyscraper sheaves of closed points then $X$ is isomorphic to $Y$.    
\end{lemma}


\begin{proof}
Let $R_X$ denote the full subcategory of $\der$ consisting of objects with zero dimensional support.  Consider objects $P \in R_X$ satisfying:
\renewcommand{\labelenumi}{\emph{\roman{enumi})}}
\begin{enumerate}
\item $\text{Hom}^{<0}(P,P) = 0$,

\item $\text{Hom}^{0}(P,P) = k(P)$ with $k(P)$ a field.

\end{enumerate}
Call this class of objects $T_X$.  By Lemma 4.5 of \cite{H}, all such objects are isomorphic to $\O_x[r]$ for some $r \in \Z$ and $x$ a closed point of $X$.

By Proposition~\ref{prop:recoverinvertibles}, invertible objects for $T_X$ are precisely the objects isomorphic to shifted line bundles.  Fix an invertible object $N$ and  let $X_0$ be the set of isomorphism classes of objects $P \in T_X$ such that $\hom (N,P) \neq 0$.  This set is in bijection to $\{\O_x[r]\}$, where $x$ is a closed point of  $X$ and $r \in \Z$ is fixed.  Thus, $X_0$ can be thought of as the set of closed points of $X$.

If $F: \der \to \dery$ is an equivalence which sends $R_X$ to $R_Y$, then it sends $T_X$ to $T_Y$ and it follows that it sends $X_0$ to $Y_0$, where $Y_0$ is constructed by fixing the invertible object $F(N)$.  Hence it induces a set theoretic map $f: X_0 \to Y_0$.  This map is clearly injective as $F$ is an equivalence.  To show that $f$ is surjective, observe that the set $Y_0$ has the property that for any object $B \in \dery$ there exists a $[P] \in Y_0$ such that $\text{Hom}(B, P[n]) \neq 0$ for some $n$.  Similarly, the set $X_0$ has the same property for any object of $\der$.  Hence, as $F$ is an equivalence, $f(X_0)$ has this property.  One sees easily that no proper subset of $Y_0$ has this property.     Therefore $f$ is surjective.

We proceed by recovering the Zariski topology on $X_0$ and $Y_0$.  By Proposition~\ref{prop:recoverinvertibles}, all objects isomorphic to line bundles (shifted by the same fixed $r$ as above) are characterized by,
$$
\text{Pic}_{X,N} := \{ L | L \text{ is invertible and }\text{Hom}(L, P) = k(P) \; \forall P \in X_0 \}.
$$
Now given any two objects $L_1, L_2 \in \text{Pic}_{X,N}$, a morphism $\alpha \in \hom(L_1, L_2)$, and an object $P \in X_0$,  we get an induced map,
$$
\alpha^*_P: \hom(L_2, P) \to \hom(L_1, P).
$$
Denote by $X_\alpha$ the subset of those objects $P \in X_0$ for which $\alpha^*_P \neq 0$.  Then $X_\alpha$ is the subset of $X_0$ which is the complement of the zero-locus of $\alpha$.  Since $X$ is divisorial, letting $\alpha$ run over all morphisms in $\hom (L_1, L_2)$ and $L_1, L_2$ run over all line bundles, we get a basis for the Zariski topology on $X_0$.  The same is done for $Y_0$ using $F(N)$.  As the equivalence induces a bijection from  $X_0$ to $Y_0$, it also induces a bijection from $\text{Pic}_{X,N}$ to $\text{Pic}_{Y,F(N)}$.  Hence, the map $f$ is a homeomorphism of topological spaces.

We now recover the structure sheaf on $X_0$ and $Y_0$.  For each open set $U \subseteq X$ there is a corresponding open set $U_0 \subseteq X_0$.  Define $\text{D}_{X \backslash U}$ to be the full subcategory of $\der$ consisting of objects $A$ such that $\text{Hom}(A,P[i])= 0$ for all $ P \in U_0$ and all $i \in \Z$.  This is the subcategory of objects supported on $X \backslash U$.  Localizing we reconstruct $\text{D}^{\text{b}}_{\text{coh}}(U)$ i.e. $\text{D}^{\text{b}}_{\text{coh}}(U)$ is the Verdier quotient of $\der$ by $D_{X \backslash U}$.  Hence we can reconstruct the structure sheaf on $X_0$ as $\O_{X_0}(U_0) := \text{Hom}_{D^b_{\text{coh}}(U)}(N,N)$, giving $X_0$ the structure of a locally ringed space.  Doing the same for $Y_0$, we surmise that $f$ is an 
isomorphism of locally ringed spaces.    As $X$ and $Y$ are varieties, they must also be isomorphic as schemes.

The first statement in the lemma is proven.  The later statement is easier - it starts with the data of $T_X$, which we recovered from $R_X$.



\end{proof}

\begin{remark}
When an equivalence like the one in the above lemma is given by a Fourier-Mukai transform, then there is a stronger result.  Namely, such a Fourier-Mukai transform is supported on the graph of an isomorphism between $X$ and $Y$ i.e. it is the composition of tensoring with a line bundle, an isomorphism, and a shift (see for example \cite{BO,H}).  Recent results, independently obtained by M. Ballard in \cite{Bal} and V. Lunts and D. Orlov in \cite{LO} show that any equivalence between quasi-projective varieties can be approximated by a Fourier-Mukai transform, meaning there is a Fourier-Mukai functor which acts the same way on objects.  Hence, from these works, the above lemma follows immediately for quasi-projective varieties.
\end{remark}

\begin{remark} The proof applies to a larger class of schemes than just varieties.  Instead suppose $X$ is a noetherian scheme and let $X_0$ be the set of closed points of $X$.   The proof requires that the prime ideals are in bijection with irreducible closed subsets of the set of closed points.  Many of the proofs below apply to this situation as well.
\end{remark}

We now arrive at our first theorem,
\begin{theorem} \label{thm:reconstruction}
Let $X$ and $Y$ be divisorial varieties, $F: \der \to \dery$ an equivalence, $\Fam$ an ample family of line bundles on $X$, and $\{ \mathcal M_i \}$ any collection of line bundles on $Y$.  If $F^{-1} \circ \mu_{\mathcal M_i} \circ F = \mu_{\mathcal A_i}$ then $X$ is isomorphic to $Y$.
\end{theorem}

\begin{proof}
Suppose $F: \der \to \dery$ is an equivalence and $F^{-1} \circ \mu_{\mathcal M_i} \circ F = \mu_{\mathcal A_i}$.  Then $F$ takes point objects with respect to $\{\mu_{\mathcal A_i}\}$ to point objects with respect to $\{\mu_{\mathcal M_i} \}$.  By Proposition~\ref{prop:recoverpoints}, all point objects with respect to $\{\mu_{\mathcal A_i}\}$ are isomorphic to $\O_x[r]$ for some closed point $x \in X$ and some $r \in \Z$.  Since for any closed point $y \in Y$ and any $n \in \Z$ the objects $\O_y[n]$ are point objects with respect to $\{\mu_{\mathcal M_i} \}$ we have that $F(\O_y[n])$ is isomorphic to an object of the form $\O_x[r]$.  The theorem follows from Lemma~\ref{lem:pointstopoints}.
\end{proof}

\begin{remark}
 To obtain this result, it is not strictly necessary to use the bounded derived categories of coherent sheaves.  For quasi-projective varieties it is known that the bounded derived category of coherent sheaves can be recovered from the derived category of quasi-coherent sheaves; it is precisely the full subcategory of locally cohomologically finitely presented objects(in fact the statement is true for a much larger class of schemes satisfying a certain technical condition, see \cite{Ro2} for details).  Hence one could begin by positing an equivalence between the derived categories of quasi-coherent sheaves on two quasi-projective varieties $X$ and $Y$.  One could also begin by positing an equivalence between the bounded derived category of quasi-coherent sheaves on two divisorial varieties $X$ and $Y$.  In this latter case, the bounded derived category of coherent sheaves is encoded as the full subcategory of compact objects (this works for any separated noetherian scheme, see \cite{Ro2}).
\end{remark}

\begin{remark}
Let $X$, $Y$, and $Z$ be divisorial varieties and $F: \idery \to \ider$, $G: \iderz \to \ider$ be equivalences.  Let $\{\A^Y_i\}$ be an ample family of line bundles  on $Y$ and $\{\A^Z_i\}$  be an ample family of line bundles on $Z$.  Then we have two collections of autoequivalences of $\ider$, namely $\{ F \circ \mu_{\A^Y_i} \circ F^{-1} \}$ and  $\{ G \circ \mu_{\A^Z_i} \circ G^{-1} \}$.  If these collections are isomorphic, then the theorem tells us that $Y$ is isomorphic to $Z$.  Therefore varieties with equivalent derived categories to $\ider$ are encoded in $\emph{Aut}(\ider)$ via collections as above.  This will be used in the next section to study Fourier-Mukai partners.  However, the author believes that more could be accomplished were one to identify categorical conditions which isolate collections of autoequivalences of this form.
\end{remark}

\begin{corollary}
Suppose $X$ is a quasi-affine variety and $Y$ is any divisorial variety.  Suppose there exists an equivalence, $F: \ider \to \idery$, then $X$ is isomorphic to $Y$.
\end{corollary}

\begin{proof}
The functor $\mu_{\O_X}$ is the identity functor, hence for any equivalence $F: \der \to \dery$ we have $F^{-1} \circ \mu_{\O_X}\circ F \cong \mu_{\O_Y}$.  Since $X$ is quasi-affine, $\{\O_X\}$ is an ample family.
\end{proof}


We would now like to extend Bondal and Orlov's aforementioned result to non-proper varieties (in fact the above corollary will be a special case of our generalization).  To do this, we begin by considering, $\mathfrak{Prop}(X)$, the full subcategory of $\der$ consisting of objects supported on proper subvarieties of $X$. 

\begin{theorem} \label{thm:recoverpropersupport}
Let $X$ be a $k$-variety. Then $\mathfrak{Prop}(X)$ is equivalent to the full subcategory of $\ider$ consisting of objects $A \in \ider$ with the property that $\emph{Hom}(A,B)$ is finite dimensional over $k$ for all $B \in \ider$.
\end{theorem}

\begin{proof}
Suppose $A$ is supported on a proper subscheme.  Consider the spectral sequence,
$$
E_2^{(p,q)} = \text{Hom}_{\der}(\text{H}^{-q}(A),B[p]) \Rightarrow \text{Hom}_{\der}(A,B[p+q]).
$$


Each term in the spectral sequence is a finite dimensional vector space hence $$\text{Hom}_{\der}(A, B)$$ is finite dimensional.

On the other hand, suppose $A$ is supported on a non-proper subscheme.  Let $m$ be the greatest integer such that $\text{H}^{m}(A)$ is supported on a non-proper subscheme.  Since the support is not proper, there exists an affine curve $C \subseteq \text{Supp}(\text{H}^m(A))$.  Let $i$ denote the inclusion map, $i: C \to \text{Supp}(\text{H}^m(A))$.  We compute $\text{Hom}_{\der}(A,i_*i^*\text{H}^m(A))$ (the pullback is not derived so that $i_*i^*\text{H}^m(A)$ is a sheaf concentrated in degree zero).  Using the same spectral sequence as before we have,
$$
E_2^{(p,q)} = \text{Hom}_{\der}(\text{H}^{-q}(A),i_*i^*\text{H}^m(A)[p]) \Rightarrow \text{Hom}_{\der}(A,i_*i^*\text{H}^m(A)[p+q]).
$$
Notice that since C is affine $E_2^{(0,-m)} \cong \text{Hom}(i^*\text{H}^m(A),i^*\text{H}^m(A)) $ is the endomorphism ring of a module supported on all of $C$.  Hence this is an infinite dimensional vector space.  Furthermore all the terms below it in the spectral sequence are finite dimensional and all the terms to the left are zero.    As the differentials point downward and to the right, it follows that this term is affected by finite dimensional vector spaces only a finite number of times before it stabilizes.  Hence $E_\infty^{(0,-m)}$ is infinite dimensional.  Thus there is a filtration of $$\text{Hom}_{\der}(A,i_*i^*\text{H}^m(A)[-m])$$ which contains an infinite dimensional vector space.  Hence $$\text{Hom}_{\der}(A,i_*i^*\text{H}^m(A)[-m])$$ is infinite dimensional.
\end{proof}

\begin{remark} This statement is formally similar to a result of Orlov's in \cite{O3}.  This result says that the category $\mathfrak {Perf}(X)$ formed by perfect complexes can be recovered from the (unbounded) derived category of coherent sheaves.  More precisely, the triangulated subcategory formed by perfect complexes is exactly the full subcategory of homologically finite objects.
\end{remark}

\begin{lemma}\label{lem:equaldimension}
Suppose $X$ and $Y$ are smooth varieties with equivalent bounded derived categories of coherent sheaves.  Then $\emph{dim }X = \emph{dim }Y$.
\end{lemma}

\begin{proof}
By Theorem~\ref{thm:recoverpropersupport}, $F$ restricts to an equivalence $F: \mathfrak{Prop}(X) \to \mathfrak{Prop}(Y)$.  
Let $\omega_X$ be the canonical bundle of $X$.  The category, $\mathfrak{Prop}(X)$, comes equipped with a Serre functor $S_X \cong \mu_{ \omega_{X}}[\text{dim }X]$.  By uniqueness of the Serre functor (see \cite{H}, Lemma 1.30), $F \circ S_X \circ F^{-1} \cong S_Y$.  Hence point objects with respect to the $S_X[-\text{dim} X]$ are sent by $F$ to point objects with respect to $S_Y[-\text{dim} X]$ and vice versa.  However, one easily verifies that there exist non-zero point objects with respect to $S_Y[r]$ if and only if $r = -\text{dim }Y$.  Hence $\text{dim }X = \text{dim }Y$.
\end{proof}

\begin{theorem} \label{thm:BOgeneralization}
Let $X$ be a smooth variety.  Suppose that for any proper subvariety, $Z \subseteq X$, the canonical bundle of $X$, restricted to $Z$, is either ample or anti-ample (this is allowed to vary among the subvarieties).  If $Y$ is a smooth variety and $\ider$ is equivalent to $\idery$, then $X$ is isomorphic to $Y$.  
\end{theorem}

\begin{proof}
The proof begins exactly the same way as in the lemma above.  
That is, by Theorem~\ref{thm:recoverpropersupport}, $F$ restricts to an equivalence $F: \mathfrak{Prop}(X) \to \mathfrak{Prop}(Y)$, which preserves Serre functors.
Now, clearly for any closed point $y \in Y$ and any $s\in \Z$, the object $\O_y[s]$ is a point object for $S_Y[-\text{dim }Y]$.  Hence $F^{-1}(\O_y)[s]$ is a point object with respect to $S_X[-\text{dim } Y]$.  By the above lemma, $S_X[-\text{dim }Y] = S_X[-\text{dim }X]$.  Now $F^{-1}(\O_y)[s]$ is properly supported, hence by assumption $S_X[-\text{dim }X]$ acts on $F^{-1}(\O_y)[s]$ by tensoring with an ample or anti-ample line bundle.  If follows from the proof of Proposition~\ref{prop:recoverpoints}, that since $F^{-1}(\O_y)[s]$ is a point object with respect to $S_X[-\text{dim }X]$, it is isomorphic to $\O_x[r]$ for some closed point $x \in X$ and some $r \in \Z$.  The theorem follows from Lemma~\ref{lem:pointstopoints}.
\end{proof}

As mentioned in the introduction, it was pointed out to the author by D. Arinkin that the moduli space of integrable systems on an abelian variety satisfies the conditions of the above theorem.  This leads to the following:

\begin{theorem}[Arinkin]
If two abelian varieties $A$ and $B$ have equivalent derived categories of coherent $D$-modules then $A \cong B$ 
\end{theorem}

\begin{proof}
Let $\mathfrak g = \text{H}^1(A, \O_A)$.  Then there is a tautological extension
$$
0 \to \mathfrak g^* \otimes \O_A \to \mathcal E \to \O_A \to 0
$$  
which corresponds to the identity of $\text{End}(\mathfrak g^*) = \text{Ext}^1(\O_A, \mathfrak g^* \otimes \O_A)$.  Let $A^\natural$ be the $\mathfrak g^*$-principal bundle associated to the extension $\mathcal E$.  Then the derived category of $D$-modules on $A$ is equivalent to the category of coherent sheaves on $\hat{A}^\natural$ where $\hat{A}$ is the dual abelian variety \cite{L,Roth, PR}.  Hence, by assumption, there is an equivalence $F: \text{D}^\text{b}_{\text{coh}}(\hat{A}^\natural) \to \text{D}^\text{b}_{\text{coh}}(\hat{B}^\natural)$

We now show that for an abelian variety $A$, the space $A^\natural$ has only finite sets of points as proper subvarieties.  Suppose $P \subseteq A^\natural$ is a proper subvariety.  Since $A^\natural$ is an affine bundle over $A$, the projection $\pi: P \to A$ is finite and the pullback of $A^\natural$ to $P$ will have a section i.e. it will be a trivial affine bundle.  Now $A^\natural$ is represented by the ample class $ \text{Id} \in \text{End}(\mathfrak g^*) = \text{Ext}^1(\O_A, \mathfrak g^* \otimes \O_A)= \text{H}^{1}(A,\Omega^{1}_{A})$.   Now since $\pi: P \to A$ is finite, the projection of $\pi^*(\text{Id}) \in \text{H}^{1}(P, \pi^* \Omega^{1}_{A})$ onto $\text{H}^{1}(P,\Omega^{1}_{P})$ is also ample.  The only way an ample class on $P$ can be zero is if it is a finite set of points.

We may now apply Theorem~\ref{thm:BOgeneralization} to the equivalence $F: \text{D}^\text{b}_{\text{coh}}(\hat{A}^\natural) \to \text{D}^\text{b}_{\text{coh}}(\hat{B}^\natural)$ to yield an isomorphism between $\hat{A}^\natural$ and $\hat{B}^\natural$.  Using Hodge theory one deduces that $\hat{A}$ is isomorphic to $\hat{B}$.  Dualizing this isomorphism yields the result.
\end{proof}

\begin{remark} Actually over $\C$, $A^\natural$ is a Stein space and hence we see immediately that finite sets of points are the only proper closed subvarieties.
\end{remark}

\section{Autoequivalences and Fourier-Mukai partners}
In this section, we limit our focus to the case of smooth projective varieties.  This has a number of advantages.  First, for projective varieties, a single ample line bundle gives an ample family.   Second, due to a famous result of Orlov \cite{O2}, generalized by A. Canonaco and P. Stellari \cite{CP}, any equivalence between derived categories of smooth projective varieties is a Fourier-Mukai transform (in what follows, this fact is often used implicitly without further mention).  Third, we may use the  cohomological integral transform introduced above (see Definition~\ref{cohomological integral transform}.  However, due to previously mentioned work of M. Ballard in \cite{Bal} and V. Lunts and D. Orlov in \cite{LO}, much of what lies below can be extended to the quasi-projective situation.

Now suppose that we have two projective varieties $X$ and $Y$ and an equivalence $F: \der \to \dery$.  This equivalence induces an isomorphism between the groups of autoequivalences which we denote by, 
\begin{eqnarray*}
F_*: \text{Aut} (\der)  \to  \text{Aut}(\dery) &,&\Phi  \mapsto F \circ \Phi \circ F^{-1}.
\end{eqnarray*}
Suppose $\A$ is an ample line bundle on $X$.  As a consequence of Theorem~\ref{thm:reconstruction}, if  $F_*(\mu_{\mathcal A})= \mu_{\mathcal M}$ for some line bundle $\mathcal M$ on $Y$, then $X$ is isomorphic to $Y$.  In fact since $F$ is a Fourier-Mukai transform we can say more and indeed the following generalization will be useful in our applications.

\begin{lemma} \label{lem:twist}
Let $X$ and $Y$ be smooth projective varieties.  Let $\mathcal A$ be an ample line bundle on $X$, $\tau \in \emph{Aut}(Y)$, $\mathcal L \in \emph{Pic }Y$, and $r \in \Z$.  Suppose we have an equivalence $F: \ider \cong \idery$ such that $F_*(\mu_{\mathcal A}) \cong \mu_{ \mathcal L}\circ \tau_*[r]$.  Then $F$ is isomorphic as a functor to $\mu_{\mathcal N} \circ \gamma_* [s]$ for some line bundle $\mathcal N \in \emph{Pic}(Y)$ , some isomorphism $\gamma: X \tilde{\longrightarrow} Y$, and some $s \in \Z$.
\end{lemma}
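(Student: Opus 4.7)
The plan is to carry out the two-step reduction indicated just above the statement: show first that $r = 0$ and $\tau^n = \text{id}$ for some $n \geq 1$, then iterate the intertwining $n$ times so that the right-hand side becomes a pure line-bundle twist, and finish by invoking Proposition 3.3 together with the Huybrechts/Hille--Van den Bergh characterization cited in the paragraph above the lemma.

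Writing $E_y := F(\mathcal{O}_y)$ and iterating $(\bullet \otimes \mathcal{A}) \circ F \cong F \circ (\tau, \mathcal{L})[r]$ applied to a skyscraper gives, for every $m \in \mathbb{Z}$,
$$E_x \otimes \mathcal{A}^m \cong E_{\tau^m(x)}[mr],$$
since $\mathcal{L}$ restricts trivially to any closed point. Tensoring by a line bundle preserves cohomological range, while the shift $[mr]$ translates it by $-mr$; and because $F$ is a Fourier--Mukai transform with some kernel $P \in D^b(X \times Y)$, each $E_y$ is a derived restriction of $P$, so its cohomological range is bounded uniformly in $y$. If $r \neq 0$, the range of $E_{\tau^m(x)}$ would escape to infinity as $|m| \to \infty$. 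Hence $r = 0$, and the relation simplifies to $E_x \otimes \mathcal{A}^m \cong E_{\tau^m(x)}$. Ampleness of $\mathcal{A}$ now makes $\dim \text{Hom}(E_x, E_x \otimes \mathcal{A}^m)$ a polynomial in $m$ of degree $\dim \text{Supp}(E_x)$, while the right-hand side equals $\dim \text{Hom}(\mathcal{O}_x, \mathcal{O}_{\tau^m(x)}) \leq 1$. Therefore $\dim \text{Supp}(E_x) = 0$, so $E_x \otimes \mathcal{A}^m \cong E_x$ and $\tau^m(x) = x$ for some $m \geq 1$. Every closed point of $X$ is thus $\tau$-periodic, and the decomposition $X = \bigcup_{m \geq 1} \text{Fix}(\tau^m)$ into countably many closed subschemes forces $\tau^n = \text{id}$ for some $n$, either by Baire over an uncountable algebraically closed field or by invoking that $\text{Aut}(X)$ is a group scheme locally of finite type.

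Raising the intertwining to the $n$-th power now produces
$$F^{-1} \circ (\bullet \otimes \mathcal{A}^n) \circ F \cong (\bullet \otimes \mathcal{L}_n), \qquad \mathcal{L}_n := \bigotimes_{i=0}^{n-1} (\tau^i)^* \mathcal{L}.$$
Every skyscraper $\mathcal{O}_x$ is tautologically a point object for $(\bullet \otimes \mathcal{L}_n)$ on $X$, so $F(\mathcal{O}_x)$ is a point object for the singleton ample family $\{(\bullet \otimes \mathcal{A}^n)\}$ on $Y$; Proposition 3.3 therefore identifies it as a shifted skyscraper $\mathcal{O}_{\gamma(x)}[s_x]$. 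Thus $F$ sends shifted skyscrapers to shifted skyscrapers, and the Huybrechts/Hille--Van den Bergh result cited just above gives $F \cong (\gamma, \mathcal{N})[s]$. The main obstacle in executing this plan is the transition from \emph{every closed point is $\tau$-periodic} to \emph{$\tau$ is torsion}; the cohomological-range argument in the first step is routine given the standard bounded-kernel formalism, but the torsion deduction either needs an uncountable base field for Baire or genuinely requires using the algebraic-group structure on $\text{Aut}(X)$.
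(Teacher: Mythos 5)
Your proposal is essentially correct, but it takes a genuinely different route from the paper's, and the step you single out as the ``main obstacle'' is not actually needed. The paper gets $r=0$ by applying $F^{-1}$ to a single skyscraper $\mathcal{O}_y$ on $Y$ (since $\mathcal{O}_y\otimes\mathcal{A}\cong\mathcal{O}_y$, the fixed object $A=F^{-1}(\mathcal{O}_y)$ satisfies $A\cong\tau_*(A\otimes\mathcal{L})[r]$, which is impossible for $r\neq0$), and then handles $\tau$ by a kernel computation: the box product $P\boxtimes Q$ of the kernels of $F$ and $F^{-1}$ carries $\Delta_*\mathcal{A}^{\otimes n}$ to $(\mathrm{id}\times\tau^n)_*\delta^\tau_n$, so $\mathrm{Ext}^i(\mathcal{O},\mathcal{A}^{\otimes n})\cong H^i(Z_n,\cdots)$ with $Z_n$ the fixed locus of $\tau^n$; ampleness forces $Z_n\neq\emptyset$ for some $n$, this produces one point object, \cite{H} Cor.\ 6.12 then puts the equivalence in standard form on an open set $U$, and one checks $U\subseteq Z_n$, whence $\tau^n=\mathrm{id}$ by irreducibility. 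Your replacement --- comparing the Hilbert-polynomial growth of $\dim\mathrm{Hom}(E_x,E_x\otimes\mathcal{A}^m)$, of degree $\dim\mathrm{Supp}(E_x)$ for $m\gg0$, against the uniform bound $\dim\mathrm{Hom}(\mathcal{O}_x,\mathcal{O}_{\tau^m(x)})\le\dim_k k(x)$ --- is a clean and standard alternative, and your uniform-amplitude argument for $r=0$ is also fine (though the paper's single-object version avoids needing the kernel there).

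What you miss is that once $\dim\mathrm{Supp}(E_x)=0$ you are already done, and no torsion statement about $\tau$ is required. A line bundle is trivial on a neighbourhood of any finite set of closed points, so $E_x\otimes\mathcal{A}\cong E_x$; taking $m=1$ in your relation then gives $E_{\tau(x)}\cong E_x$, hence $\mathcal{O}_{\tau(x)}\cong\mathcal{O}_x$, so $\tau$ fixes \emph{every} closed point outright --- you get fixed points, not merely periodic ones, so the decomposition $X=\bigcup_m\mathrm{Fix}(\tau^m)$ and the Baire/group-scheme discussion are a self-inflicted detour. Better still, $E_x$ now verifies all three conditions of Definition 3.2 for the singleton family $\{(\bullet\otimes\mathcal{A})\}$ on $Y$: conditions (ii) and (iii) are transported from $\mathcal{O}_x$ by full faithfulness, and condition (i) is exactly $E_x\otimes\mathcal{A}\cong E_x$. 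Proposition 3.3 then identifies $E_x$ as a shifted skyscraper, and the characterization from \cite{H,HV} cited before the lemma gives $F\cong(\gamma,\mathcal{N})[s]$. So your argument closes without ever proving $\tau^n=\mathrm{id}$; simply delete that step.
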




\begin{proof}
First we prove $r=0$.  Pick any closed point $x \in X$ and notice that  $\O_x \cong \O_x \otimes \A = \mu_{\mathcal A}(\O_x)$.  Hence,
\begin{align*}
F(\O_x) &\cong (F \circ \mu_{\mathcal A})(\O_x) \\
&\cong (\mu_{ \mathcal L}\circ \tau_*[r] \circ F)(\O_x) \\
&\cong (\mu_{ \mathcal L}\circ \tau_*) \circ F(\O_x) [r].
\end{align*}
Comparing the cohomology sheaves on the left hand side with those on the right hand side, one concludes that $r=0$.  In other words one has $F_*(\mu_{\mathcal A}) \cong \mu_{ \mathcal L}\circ \tau_*$.


Next we show that there exists an $n$ such that $\tau^n = \text{id}$.  We have $F \cong \Phi_P$ and $F^{-1} \cong \Phi_Q$ for some $P,Q \in D^b(X \times Y)$.  Consider the object $P \boxtimes Q \in D^b(X \times X \times Y \times Y)$.  This object also defines an equivalence \cite{H},
$$
\Phi_{P \boxtimes Q}: D^b(X \times X) \to D^b(Y \times Y).
$$
Suppose $\Phi_S$ and $\Phi_T$ are autoequivalences such that $F_*(\Phi_S) \cong \Phi_T$.  A simple calculation reveals that, 
$$\Phi_{P \boxtimes Q} (S) \cong T.$$
Now let $\Delta$ be the diagonal map and $\delta^\tau_n := \tau_*\L \otimes \tau_*^2 \L \otimes ... \otimes \tau_*^n \L$.  One easily verifies that the following equivalences are paired with the corresponding kernels below,
\begin{eqnarray*}
\underbrace{\mu_{\mathcal A}\circ \cdots \circ \mu_{\mathcal A} }_{n\text{ times}}& \cong& \Phi_{\Delta_*(\mathcal A^{\otimes n})}, \\
\underbrace{\mu_{ \mathcal L} \circ \tau_*\circ \cdots \circ \mu_{ \mathcal L} \circ \tau_* }_{n\text{ times}}& \cong &\Phi_{(\text{id} \times \tau^n)_*\delta^\tau_n} .
\end{eqnarray*}
Putting all this together we have, 
$$
\Phi_{P \boxtimes Q} (\Delta_*(\mathcal A^{\otimes n})) \cong (\text{id} \times \tau^n)_*\delta^\tau_n.$$

Now let $Z_n$ denote the fixed locus of $\tau^n$.  Then we have,
\begin{align*}
\text{Hom}_X(\O_X , \mathcal A^{\otimes n}) &\cong \text{Hom}_{X \times X}(\Delta_* \O_X, \Delta_* \mathcal A^{\otimes n}) \\
& \cong \text{Hom}_{Y \times Y}(\Phi_{P \boxtimes Q}(\Delta_* \O_X), \Phi_{P \boxtimes Q}(\Delta_* \mathcal A^{\otimes n})) \\
&\cong \text{Hom}_{Y \times Y}(\Delta_*\O_Y, (\text{id} \times \tau^n)_*\delta^\tau_n ) \\
& \cong \text{Hom}_Y(\O_Y, \Delta^!(\text{id} \times \tau^n)_*\delta^\tau_n ). 
\end{align*}
The final term is the zeroth cohomology of a complex supported on $Z_n$.  Therefore, since $\mathcal A$ is ample, there exists an $n$ such that $Z_n \neq \emptyset$.  Now, for $z \in Z_n$, $\O_z$ is a point object with respect to 
$$
\tau^n_* \circ \mu_{\delta^\tau_n} \cong \underbrace{\mu_{ \mathcal L} \circ \tau_*\circ \cdots \circ \mu_{ \mathcal L} \circ \tau_*}_{n\text{ times}} .
$$  Hence $\Phi_Q(\O_z)$ is a point object with respect to $\mu_{\mathcal A^{\otimes n}}$.  From Proposition~\ref{prop:recoverpoints} one obtains an isomorphism   $\Phi_Q(\O_z) \cong \O_x[r]$ for some closed point $x \in X$ and some $s  \in \Z$.

As $\Phi_Q(\O_z) \cong \O_x[r]$, Corollary 6.12 of \cite{H} posits the existence of an open set $U$ (of the set of closed points) and a morphism $f: U \to X$ such that for any $y \in U$ we have that $\Phi_Q (\O_y) \cong \O_{f(y)}[s]$.  Moreover, $f$ must be injective since F is an equivalence.  Let $v \in f(U)$, then as $\O_v$ is a point object with respect to $\mu_{{\mathcal A}^{\otimes n}}$, it follows that $\O_{f^{-1}(v)}$ is a point object with respect to $\tau^n_* \circ \mu_{\delta^\tau_n}$.  Therefore $f^{-1}(v)$ is a fixed point of $\tau^n$.  Therefore the fixed locus contains $U$, but as the fixed locus is closed and $X$ is irreducible, the fixed locus is the whole space, i.e. $\tau^n = \text{id}$.

Now, as $F_*(\mu_{\mathcal A}) \cong \mu_{ \mathcal L}\circ \tau_*$ , one has
\begin{align*}
F_*(\mu_{\mathcal A^{\otimes n}}) & \cong \underbrace{\mu_{ \mathcal L} \circ \tau_*\circ \cdots \circ \mu_{ \mathcal L} \circ \tau_*}_{n\text{ times}} \\
& \cong \tau^n_* \circ \mu_{\delta^\tau_n}  \\
& \cong \mu_{\delta^\tau_n} .
\end{align*}
As in the proof of Theorem~\ref{thm:reconstruction}, it follows that for any closed point $y \in Y$,  one has $F^{-1}(O_y) \cong O_x[t]$ for some closed point $x \in X$ and some $t \in \Z$.  At this stage, one could use Lemma~\ref{lem:pointstopoints} to prove that $X$ is isomorphic to $Y$ but in this context we can say even more.  As $F$ is a Fourier-Mukai transform, Corollary 5.23 in \cite{H} tells us that $F$ is isomorphic as a functor to $\mu_{\mathcal N} \circ \gamma_* [s]$ for some line bundle $\mathcal N \in \text{Pic}(Y)$ , some isomorphism $\gamma: X \tilde{\longrightarrow} Y$, and some $s \in \Z$.

\end{proof}

\begin{remark}  
If $\mathcal A$ is not ample then the above is not necessarily true.  For example, take $\mathcal P \in D^b_{\text{coh}}(A \times \hat{A})$ to be the Poincare line bundle on an abelian variety $A$.  Let $\L$ be a degree zero line bundle on $A$ thought of as an element of $\hat A$. Further, let $t_{\L}$ denote translation by $\L \in \hat{A}$, then ${\Phi_{\mathcal P}}_*(\mu_{\L}) \cong {t_{\L}}_*$.  However, $\Phi_{\mathcal P}$ is not isomorphic to a functor which is the composition of tensoring with a line bundle and pushing forward by an isomorphism.
\end{remark}

To illustrate how Lemma~\ref{lem:twist} can be utilized to bound the number of Fourier-Mukai partners of a given variety we provide some easy corollaries here.  These will serve as a warm-up to Theorem~\ref{thm:representation}.

\begin{corollary} \label{cor:maximalabelian}
The number of projective Fourier-Mukai partners of a smooth projective variety, $X$, is bounded by the number of conjugacy classes of maximal abelian subgroups of $\emph{Aut}(\ider)$.
\end{corollary}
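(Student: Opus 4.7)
The plan is to define a map from isomorphism classes of projective Fourier-Mukai partners of $X$ to conjugacy classes of maximal abelian subgroups of $\text{Aut}(\der)$, and show it is injective. For each such partner $Y$, I would fix an equivalence $F_Y: \der \to \dery$ together with an ample line bundle $\mathcal A_Y$ on $Y$, and by Zorn's lemma choose a maximal abelian subgroup $M_Y \subseteq \text{Aut}(\dery)$ containing the autoequivalence $(\bullet \otimes \mathcal A_Y)$. The subgroup $H_Y := F_Y^{-1} M_Y F_Y \subseteq \text{Aut}(\der)$ is then maximal abelian, and I would assign to $Y$ the conjugacy class $[H_Y]$.

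For injectivity, suppose $H_Y = \Psi H_{Y'} \Psi^{-1}$ for some $\Psi \in \text{Aut}(\der)$. Setting $G := F_Y \circ \Psi \circ F_{Y'}^{-1}: D^b_{coh}(Y') \to \dery$, a direct computation gives $G M_{Y'} G^{-1} = M_Y$, so in particular $G \circ (\bullet \otimes \mathcal A_{Y'}) \circ G^{-1}$ lies in $M_Y$. The key technical step is to identify the elements of $M_Y$: for any $\Theta \in M_Y$ the commutation $\Theta \circ (\bullet \otimes \mathcal A_Y) \circ \Theta^{-1} = (\bullet \otimes \mathcal A_Y) = (\text{id}_Y, \mathcal A_Y)[0]$ lets me invoke Lemma 4.1 on $\Theta$ (viewed as an autoequivalence of $\dery$, with source and target variety both equal to $Y$), concluding $\Theta \cong (\gamma, \mathcal N)[s]$ for some $\gamma \in \text{Aut}(Y)$, $\mathcal N \in \text{Pic}(Y)$, $s \in \Z$. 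Consequently $G \circ (\bullet \otimes \mathcal A_{Y'}) \circ G^{-1} = (\tau, \mathcal L)[r]$ for some $\tau \in \text{Aut}(Y)$, $\mathcal L \in \text{Pic}(Y)$, $r \in \Z$.

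A second application of Lemma 4.1, now to the equivalence $G^{-1}: \dery \to D^b_{coh}(Y')$ with ample bundle $\mathcal A_{Y'}$ on its target, forces $G^{-1} \cong (\gamma', \mathcal N')[s']$ with $\gamma': Y \to Y'$ an honest isomorphism of varieties. This yields $Y \cong Y'$ and establishes the bound. The main obstacle I foresee is notational bookkeeping: tracking the direction of conjugation when invoking Lemma 4.1 twice — once to pin down the internal structure of $M_Y$, and once to promote $G$ to a scheme-theoretic isomorphism between $Y$ and $Y'$. Once these applications are set up consistently (and one checks, as above, that the choice of $F_Y$, $\mathcal A_Y$, and $M_Y$ is immaterial to the injectivity argument) the rest of the argument is formal.
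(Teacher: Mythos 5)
Your proposal is correct and follows essentially the same route as the paper's proof: assign to each partner a conjugacy class of maximal abelian subgroups via an ample twist, then apply Lemma 4.1 twice --- once to identify the elements of the maximal abelian subgroup as being of the form $(\gamma,\mathcal N)[s]$, and once to upgrade the resulting equivalence to an isomorphism of varieties. The only cosmetic difference is that the paper takes a maximal abelian subgroup containing the whole image of $\text{Pic}(Y)$ rather than just the single autoequivalence $(\bullet \otimes \mathcal A_Y)$, which changes nothing in the argument.
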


\begin{proof}
Notice that for a variety $X$, the Picard group, $\text{Pic}(X)$, acts by tensor multiplication on $\der$ - we use this to identify $\text{Pic}(X)$ with a subgroup of $\text{Aut}(\der)$.  Suppose $Y$ and $Z$ are projective Fourier-Mukai partners with equivalences given by $F: \der \to \dery$ and $G: \der \to \derz$.  In addition, assume that ${F^{-1}}_*(\text{Pic}(Y))$ and ${G^{-1}}_*(\text{Pic}(Z))$ are contained in conjugate maximal abelian subgroups of $\text{Aut}(\der)$.  Hence there exists $\psi \in \text{Aut}(\der)$ such that $(\psi \circ {F^{-1}})_*(\text{Pic}(Y))$ is contained in the same maximal abelian subgroup of $\text{Aut}(\der)$ as ${G^{-1}}_*(\text{Pic}(Z))$.  Consider the equivalence $G \circ \psi \circ F^{-1}: \dery \to \derz$.
It follows that $(G \circ \psi \circ F^{-1})_* (\text{Pic}(Y))$ commutes with $\text{Pic}(Z)$.  In particular, this subgroup commutes with $\mu_{\mathcal B}$, where $\mathcal B$ is an ample line bundle on $Z$.  From the above lemma,  any element of $(G \circ \psi \circ F^{-1})_* (\text{Pic}(Y))$ is of the form $\mu_{\mathcal L} \circ \gamma_* [s]$ for some line bundle $\mathcal N \in \text{Pic}(Z)$, some isomorphism $\gamma: Y \tilde{\longrightarrow} Z$, and some $s \in \Z$.  Let $\mathcal A$ be an ample line bundle on $Y$.   In particular, one has that $(G \circ \psi \circ F^{-1})_*(\mu_{\mathcal A}) \cong \mu_{\mathcal L} \circ \gamma [s]$ for some line bundle $\mathcal N \in \text{Pic}(Z),$ some isomorphism $\gamma: Y \tilde{\longrightarrow} Z$, and some $s \in \Z.$  Applying the above lemma again, one discovers that $Y$ and $Z$ are isomorphic.
\end{proof}

Using similar reasoning, one could also say,

\begin{corollary}
Suppose $X$ is a smooth projective variety such that for every autoequivalence of $\ider$, $\psi \in \emph{Aut}(\ider)$, there exists a power of $\psi$ which is conjugate to $\mu_{\mathcal N} \circ \gamma_*[s]$ for some $\gamma \in \emph{Aut}(X)$, $\mathcal N \in \emph{Pic}(X)$, and $s \in \Z$.  Then $X$ has no non-trivial Fourier-Mukai partners.
\end{corollary}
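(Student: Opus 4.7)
My plan is to reduce directly to Lemma 4.1. Let $Y$ be any Fourier-Mukai partner of $X$ with an equivalence $F:\der \to \dery$, and fix an ample line bundle $\mathcal A$ on $Y$. The natural object to study is the autoequivalence
$$v := F^{-1} \circ (\bullet \otimes \mathcal A) \circ F \in \text{Aut}(\der).$$
If $v$ itself had the form $(\tau,\mathcal L)[r]$, Lemma 4.1 applied to $F$ and $\mathcal A$ would immediately force $X \cong Y$. The hypothesis of the corollary supplies only the weaker statement that some positive power $v^n$ is conjugate, inside $\text{Aut}(\der)$, to a functor of this form.

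The key observation is that such a conjugation can be absorbed into a modified equivalence between $\der$ and $\dery$. By hypothesis there exist a positive integer $n$, an autoequivalence $G \in \text{Aut}(\der)$, and elements $\gamma \in \aut$, $\mathcal N \in \pic$, $s \in \Z$ with
$$G \circ v^n \circ G^{-1} = (\gamma,\mathcal N)[s].$$
Because $(\bullet \otimes \mathcal A)$ commutes with itself under composition, $v^n = F^{-1} \circ (\bullet \otimes \mathcal A^{\otimes n}) \circ F$. Setting $H := F \circ G^{-1} : \der \to \dery$, this rearranges into
$$H^{-1} \circ (\bullet \otimes \mathcal A^{\otimes n}) \circ H = (\gamma,\mathcal N)[s].$$

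Now $\mathcal A^{\otimes n}$ is again ample on $Y$, so the pair $(H, \mathcal A^{\otimes n})$ satisfies precisely the hypothesis of Lemma 4.1. Applying that lemma yields $H \cong (\gamma',\mathcal N')[s']$ for some isomorphism $\gamma' : X \tilde{\longrightarrow} Y$, some $\mathcal N' \in \text{Pic}(Y)$, and some $s' \in \Z$. In particular $X \cong Y$, so the only Fourier-Mukai partner of $X$ up to isomorphism is $X$ itself.

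There is no serious obstacle in this argument: the only nontrivial ingredient is Lemma 4.1, and the remaining content is the bookkeeping observation that conjugating $v^n$ by $G$ inside $\text{Aut}(\der)$ is the same as replacing the original equivalence $F$ with $F \circ G^{-1}$. The only small point to verify is that one is allowed to pass to a positive power, which is fine because positive tensor powers of an ample line bundle are ample.
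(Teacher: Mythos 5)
Your proof is correct and is essentially the paper's own argument: both take $v = F^{-1}\circ(\bullet\otimes\mathcal A)\circ F$ for an ample $\mathcal A$ on the partner, absorb the conjugating autoequivalence into a modified equivalence ($H = F\circ G^{-1}$, the paper's $F\circ t$), and then apply Lemma 4.1 to the still-ample bundle $\mathcal A^{\otimes n}$. Nothing further is needed.
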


\begin{proof}
Suppose $Y$ is a smooth projective variety and $F: \dery \cong \der$.  Let $\mathcal A$ be an ample line bundle on $Y$.  Then by hypothesis there exists an $n \in \N$ and an  $\alpha \in \text{Aut}(\der)$ such that $F_* (\mu_{ \mathcal A^{\otimes n}})  = \alpha_* (\mu_{\mathcal N} \circ \gamma_*[s])$.  So we have $(\alpha^{-1} \circ F)_*(\mu_{\mathcal A^{\otimes n}}) = \mu_{\mathcal N} \circ \gamma_*[s].$  The lemma implies that $X$ is isomorphic to $Y$.
\end{proof}


The ideas of the above corollaries lead us to our main result,
\begin{theorem}  \label{thm:representation}
Let $X$ be a smooth projective variety over $\C$ and $\rho_X$ be the representation of the autoequivalence group of the derived category, $\emph{Aut}(\ider)$, on the de Rham cohomology group $\emph{H}^*(X, \Q)$.  If the kernel of $\rho_X$ is minimal in the sense that it is equal to $2\Z \times \emph{Aut}^0(X) \ltimes \emph{Pic}^0(X)$, then the number of projective Fourier-Mukai partners of $X$ is bounded by the number of conjugacy classes of maximal unipotent subgroups of the image of $\rho_X$.  In particular, if the image of $\rho_X$ is an arithmetic subgroup of a semisimple lie group, then the number of projective Fourier-Mukai partners of $X$ is finite i.e. Conjecture~\ref{conj:1} holds for $X$ (with regards to projective Fourier-Mukai partners).
\end{theorem}

\begin{proof}
First observe that given any equivalence of categories, $F: \der \to \dery$, the conjugation $F_*$ induces an isomorphism of exact sequences.
\[
\begin{CD}
0      @>>> \text{ker }\rho_X    @>>>   \text{Aut}(\der) @>\rho_X>> \text{im }\rho_X  @>>> 0   \\
@.        @VVV                      @VF_*VV                      @VVV                  @.    \\
0      @>>> \text{ker }\rho_Y    @>>>   \text{Aut}(\dery) @>\rho_Y>> \text{im }\rho_Y  @>>> 0
\end{CD}
\]
In particular (by abuse of notation) we have an isomorphism $F_* : \text{ker }\rho_X \tilde{\longrightarrow} \text{ker }\rho_Y$.  A theorem of R. Rouquier (see \cite{H,Ro}) and independently due to Rosay (see \cite{Ros}), states that $F_*$ induces an isomorphism of group schemes $F_*: \text{Aut}^0(X) \ltimes \text{Pic}^0(X) \tilde{\longrightarrow} \text{Aut}^0(Y) \ltimes \text{Pic}^0(Y)$.  In fact, in Rosay's thesis, he proves that $\text{Aut}(\der)$ is a group scheme and $\text{Aut}^0(X) \ltimes \text{Pic}^0(X)$ is the connected component of the identity functor.  We are able to conclude that if $Y$ is any Fourier-Mukai partner of $X$, then $\text{ker }\rho_Y = 2\Z \times \text{Aut}^0(Y) \ltimes \text{Pic}^0(Y)$.

Let $Y$ and $Z$ be two projective Fourier-Mukai partners of $X$ with ample line bundles $\A_Y$ and $\A_Z$ respectively.  Fix equivalences $F: \dery \to \der$ and $G: \derz \to \der$.  Notice that the action of an ample line bundle on the cohomology of a space is given by multiplication by the Chern character and thus is unipotent.  Let $y=\rho_X(F_*(\mu_{\A_Y}))$ and $z =\rho_X(G_*(\mu_{ \A_Z}))$.  It follows that $y$ and $z$ are both unipotent.

Now suppose that $y$ and $z$ lie in maximal unipotent subgroups which are conjugate.  Then by altering one of the equivalences by an autoequivalence, as in the proof of Corollary~\ref{cor:maximalabelian}, we may assume they lie in the same maximal unipotent subgroup.
Inductively define $z_i = [y,z_{i-1}] = yz_{i-1}y^{-1}{z_{i-1}}^{-1}$ with $z_0 = z$.  The lower central series of any unipotent group terminates, in particular, there exists an $n$ such that $z_n = \text{id}$.  For each $i \neq 0$ choose a lift $Z_i$ of $z_i$ to $\text{Aut}(\dery)$ so that one has $\rho_X(F_*(Z_i)) = z_i$.  For $i=0$ we set $Z_0 = (F^{-1} \circ G)_*(\mu_{\mathcal A_Z})$.

We now proceed by backward induction with $n$ the base case and ending at zero.  Assume that $Z_i \cong \mu_{N_i} \circ \gamma_i [s_i]$ for some line bundle $\mathcal N_i \in \text{Pic}(Y)$, some automorphism $\gamma_i \in \text{Aut}(Y)$ and some $s_i \in \Z$.  We want to show that the same holds for $i-1$.  First we look at the base case where $z_n=\text{Id}$.  It follows from the discussion in the first paragraph of the proof, that  $Z_n \in \text{ker }\rho_Y = 2\Z \times \text{Aut}^0(Y) \ltimes \text{Pic}^0(Y)$. Thus, the induction hypothesis is satisfied for $i=n$.

Now, by definition $z_{i-1}yz_{i-1}^{-1} = yz_{i}$.  Lifting this equation to $\dery$, there exists a $\psi \in \text{ker }\rho_Y$ such that ${Z_{i-1}}_*(\mu_{\A_Y}) \cong \mu_{\A_Y} \circ Z_i  \circ \psi$.  Again by the discussion in the first paragraph of the proof, $\psi \cong \mu_{\L} \circ \alpha_* [2t]$ for some line bundle $\L \in \text{Pic}^0(Y)$, some automorphism $\alpha \in \text{Aut}^0(Y)$, and some $t \in \Z$.
We obtain the following sequence of isomorphisms of functors,
\begin{align*}
{Z_{i-1}}_*(\mu_{\A_Y}) &\cong \mu_{\A_Y} \circ Z_i  \circ \psi\\
& \cong \mu_{\A_Y} \circ \mu_{\mathcal N_{i-1}} \circ {\gamma_{i-1}}_* \circ \psi [r_{i-1}]\text{ (by the induction hypothesis)} \\
& \cong \mu_{\A_Y} \circ \mu_{\mathcal N_{i-1}} \circ {\gamma_{i-1}}_* \circ \mu_{\L} \circ \alpha_*[2t] [r_{i-1}]\text{ (as }\psi \in \text{ker }\rho_Y) \\
& \cong \mu_{\A_Y \otimes \mathcal N_{i-1} \otimes {\gamma_{i-1}}_*\mathcal L} \circ (\gamma_{i-1}  \circ \alpha)_* [2t + r_{i-1}].
\end{align*}
Applying Lemma~\ref{lem:twist}, one obtains the induction step.  We conclude that $Z_0 \cong \mu_{\mathcal N} \circ \gamma_* [r]$ for some $\mathcal N \in \text{Pic}(Y)$ ,  $\gamma \in \text{Aut}(Y)$, and $r \in \Z$.  

Finally, $(F^{-1} \circ G)_* (\mu_{\mathcal A_Z}) = Z_0 \cong \mu_{\mathcal N} \circ \gamma_*[r]$.  Applying Lemma~\ref{lem:twist} once again, we obtain an isomorphism between $Y$ and $Z$.  This tells us that the number of Fourier-Mukai partners of $\der$ is bounded by the number of conjugacy classes of maximal unipotent subgroups of the image of $\rho_X$.  The fact that the number of conjugacy classes of maximal unipotent subgroups of an arithmetic group is finite is well-known\footnote[1]{Stated this way the result can be found as \cite{M} Corollary 9.38.   It is equivalent to Theorem 9.37 of \cite{M} which says that for any parabolic subgroup $P$, and arithmetic group $\Gamma$ of an algebraic group $G$ the double-coset space $\Gamma \backslash G_{\Q} / P_{\Q}$ is finite.  The latter statement can be found for example in \cite{B} Theorem 15.6 or \cite{Ra} Theorem 13.26.}.
\end{proof}

\begin{remark} On an even dimensional variety the square of a spherical twist acts trivially on cohomology \cite{ST} and similarly any $\P^n$-twist acts trivially on cohomology \cite{HT}.  Hence any variety whose derived category possesses  such an autoequivalence, does not satisfy the hypotheses of the above theorem.
\end{remark}

We now apply our theorem to the case of abelian varieties.  The autoequivalences of the derived category of an abelian variety have been well studied (see \cite{GLO,H, O, P}).  Indeed, the conditions of Theorem ~\ref{thm:representation} are satisfied for abelian varieties.  Moreover, using the technology developed in \cite{GLO}, we are able to give an explicit bound in the case where the Neron-Severi group of the abelian variety is isomorphic to $\Z$ (this is a generic condition).

First, let us explain how do obtain a certain numerical invariant of an abelian variety with Neron-Severi group isomorphic to $\Z$.  Suppose $A$ is an abelian variety with Neron-Severi group isomorphic to $\Z$.  Let $\mathcal L$ be a representative of a generator of the Neron-Severi group of $A$ and  $\mathcal M$ be a representative of a generator of the Neron-Severi group of $\hat{A}$.  As ample line bundles, $\mathcal L$ and $\mathcal M$ induce isogenies $\alpha_{\mathcal L}$ and $\alpha_{\mathcal M}$.  Taking their composition, one obtains a number, $\alpha_{\mathcal L} \circ \alpha_{\mathcal M} = N \cdot \text {Id}$.

\begin{theorem}
Conjecture~\ref{conj:1} holds (with regards to projective Fourier-Mukai partners) for abelian varieties over $\C$.  Suppose that $A$ is an abelian variety over $\C$ such that the Neron-Severi group of $A$, is isomorphic to $\Z$. Then the number of smooth projective Fourier-Mukai partners of $A$ is bounded by $\sum_{d|N} \phi(\emph{gcd}(d,\frac{N}{d}))$, which is the number of inequivalent cusps of the action of $\Gamma_0(N)$ on the upper half plane (here $\phi$ denotes Euler's Phi-function).  
\end{theorem}

\begin{proof}
From \cite{GLO}, the group $\text{Spin}(A)$ is an arithmetic subgroup of a semisimple lie group hence the conditions of Theorem~\ref{thm:representation} are satisfied.

We recall the definition of the Polishchuk group,
$$U(A):=  \{M \in \text{Aut}(A \times \hat{A}) |\text{ } M^{-1} = \text{det}(M) \overline {{M^{ - 1}}}\}. $$

For an abelian variety we have the following diagram where $\text{Spin}(A) = \text{im }\rho$ \cite{GLO,H},
$$\begin{CD}
@.@.@.@.@.\\
@.            @.                                @.                               0                  @.           \\
@.            @.                                @.                               @VVV                   @.           \\
@.            @.                                @.                               \Z/2\Z                   @.           \\
@.            @.                                 @.                               @VVV                     @.          \\
0      @>>>   2\Z \times A \times \hat{A}    @>>>   \text{Aut}(D^b(A)) @>\rho>>   \text{Spin}(A)            @>>> 0           \\
@.            @VVV                                 @VVV                          @V \pi VV                   @.          \\
0      @>>>   \Z \times A \times \hat{A}   @>>>   \text{Aut}(D^b(A)) @>>>  \Gamma_0(N)  @>>> 0              \\
@.            @VVV                                 @.                               @.                     @.          \\
@.           \Z/2\Z                                 @.                               @.                   @.        \\
@.            @VVV                              @.                               @.                     @.          \\   
@.            0                                @.                               @.                   @.        \\
@.@.@.@.@.\\
\end{CD}$$
\vspace{ .1 in}
Also, suppose $x$ and $y$ be unipotent elements of $\text{Spin}(A)$.  Assume $\pi(x)=\pi(y)$.  From the exact sequence $x = \pm y$, however since $x$ and $y$ are unipotent,  $x$ and $y$ must in fact be equal.  It follows that the maximal unipotent subgroups of $\text{Spin}(A)$ are in bijection with the maximal unipotent subgroups of $U(A)$. Hence we have reduced the study of maximal unipotent subgroups of $\text{Spin}(A)$ to maximal unipotent subgroups of $U(A)$.

Now suppose that $A$ is an abelian variety over $\C$ such that the Neron-Severi group of $A$, $\text{NS}(A)$, is isomorphic to $\Z$.  We would like to calculate the number of maximal unipotent subgroups of $\text{U}(A)$.  Let $M \in U(A)$ be a unipotent element.  As,  $M^{-1} = \text{det}(M) \overline {{M^{ - 1}}}$, one concludes that $M$ is real.  Furthermore, as $NS(A) = \Z$,  if $M$ is real, then $M$ must have integer entries.  Playing around a bit, one sees that the integral part of $U(A)$ is isomorphic as an arithmetic group to $\Gamma_0(N)$ (see also \cite{O}).  Hence any maximal unipotent subgroup of $U(A)$ is also a maximal unipotent subgroup of $\Gamma_0(N)$.  Thus we have reduced our problem further to the study of maximal unipotent subgroups of $\Gamma_0(N)$.  

The number of maximal unipotent subgroups of $\Gamma_0(N)$ is a classical problem and we briefly remind the reader of how it goes.  For a more complete picture, see for example \cite{DS}.  A matrix in $\Gamma_0(N)$ is unipotent if and only if the trace is two which occurs if and only if the action of the matrix on the upper half plane is parabolic.  Such a matrix fixes a unique cusp on the boundary of the upper half plane.  We say that two such cusps are equivalent $z_1 \sim z_2$ if and only if there exists $\gamma \in \Gamma_0(N)$ such that $\gamma(z_1) =z_2$.    Now suppose $\gamma(z_1) = z_2$ and $B$ is a unipotent matrix (hence parabolic) which uniquely fixes the cusp $z_1$. Then $\gamma^{-1}B\gamma$ uniquely fixes $z_2$.  Furthermore, suppose that $B$ is not a power of any other unipotent matrix.  One easily verifies that the subgroup generated by $B$ is a maximal unipotent subgroup.  As every cusp is fixed by some unipotent matrix in $\Gamma_0(N)$, it follows that the number of inequivalent cusps is precisely the number of maximal unipotent subgroups.  This is a well studied phenomenon.  The number of such classes of cusps is precisely, $\sum \limits_{d|N} \phi(\text{gcd}(d,\frac{N}{d}))$ (\cite{DS}, pg. 103).

\end{proof}

\noindent \textbf{Acknowledgments:} I would like to thank my adviser, Tony Pantev, for guiding me toward these ideas, endless patience, and extremely stimulating discussions.  Without his help, I would not have even known where to begin.  I would also like to thank Dmitri Orlov for several enlightening conversations and ideas, David Witt Morris for pointing out several references in the theory of arithmetic groups, and David Fithian for a useful discussion on modular forms.  This work was funded by NSF Research Training Group Grant, DMS 0636606.

\end{document}